\author{Paul Pollack}
\address{Department of Mathematics\\ University of Georgia\\ Athens, GA 30602}
\email{pollack@uga.edu}
\author{Carl Pomerance}
\address{Department of Mathematics, Santa Clara University, Santa Clara, CA 95053 ({\rm current})\\ Department of Mathematics\\ Dartmouth College\\ Hanover, NH 03755}
\email{carl.pomerance@dartmouth.edu}
\title{Phi, Primorials, and Poisson}
\subjclass[2010]{Primary 11N37; Secondary 11N36, 11N64}
\DeclareMathAlphabet{\curly}{U}{rsfs}{m}{n}
\newtheorem{thm}{Theorem}
\newtheorem{cor}[thm]{Corollary}
\newtheorem{prop}[thm]{Proposition}
\newtheorem{lem}[thm]{Lemma}
\theoremstyle{remark}
\newtheorem*{rmk}{Remark}
\begin{document}
\renewcommand{\labelenumi}{(\roman{enumi})}
\def\Ll{\mathcal{L}}
\def\N{\mathbb{N}}
\def\Q{\mathbb{Q}}
\newcommand\rad{\mathrm{rad}}
\def\Z{\mathbb{Z}}
\def\R{\mathbb{R}}
\def\C{\mathbb{C}}
\def\Pp{\mathcal{P}}
\newcommand\Li{\mathrm{Li}}
\begin{abstract}
The primorial $p\#$ of a prime $p$ is the product of all primes $q\le p$.
Let ${\rm pr}(n)$ denote the largest prime $p$ with $p\#\mid\phi(n)$, where
$\phi$ is Euler's totient function.
We show that the normal order of ${\rm pr}(n)$ is $\log\log n/\log\log\log n$.  That is,
${\rm pr}(n)\sim\log\log n/\log\log\log n$
as $n\to\infty$ on a set of integers of asymptotic density 1.  In fact we show there is an asymptotic secondary term and, on a tertiary level,
there is an asymptotic Poisson distribution.  We also show an analogous result for the
largest integer $k$ with $k!\mid \phi(n)$.
%We study the largest primorial dividing $\phi(n)$, where $\phi(n)$ is Euler's $\phi$-function. Let
%\[ A(x) = \frac{\log\log{x}}{\log\log\log{x}} + 3 \frac{\log\log{x}\cdot \log\log\log\log{x}}{(\log\log\log{x})^2} \]
%and
%\[ B(x) = \frac{\log\log{x}}{(\log\log\log{x})^2}.\]
%We show that whenever $\Lambda(x)\to\infty$ as $x\to\infty$, (asymptotically) almost all $n \le x$ have $\phi(n)$ divisible by all primes up to $A(x) - \Lambda(x) B(x)$, while almost no $n\le x$ have $\phi(n)$ divisible by all primes up to $A(x)+\Lambda(x) B(x)$. This is deduced as a corollary to a more precise theorem: For each fixed $\lambda > 0$, the statistic
%\[ \#\{\text{primes }p \le A(x) + (\log \lambda) B(x): p \nmid \phi(n)\},  \]
%computed over $n\le x$, is asymptotically Poisson-distributed with parameter $\lambda$.
\end{abstract}

\maketitle

\section{Introduction}
Euler's \emph{totient function} $\phi(n)$ may be defined as the number of units in the residue ring $\Z/n\Z$, or equivalently via the formula
\[ \phi(n) = n \prod_{\ell \mid n}\left(1-\frac{1}{\ell}\right), \]
where the product on $\ell$ is over the distinct primes dividing $n$. Our starting point in this article is the following remarkable property of $\phi$: For every fixed prime number $p$, almost every value of $\phi(n)$ is divisible by $p$. Here `almost every' means that, as $x\to\infty$, all but $o(x)$ values of $n\le x$ are such that $p \mid \phi(n)$.

How could this possibly be the case? A small piece of probabilistic reasoning dispells the mystery: Observe that $\phi(n)$ is divisible by $p$ whenever $n$ is divisible by a prime $\ell\equiv 1\pmod{p}$. Those primes $\ell$ make up a  positive proportion of all primes, namely $1$ in $p-1$, by the prime number theorem for arithmetic progressions. Almost all numbers $n\le x$ have $\approx \log\log{x}$ distinct prime factors (a classical result of Hardy and Ramanujan), and it should be unusual for these many prime factors to all avoid the residue class $1$ mod $p$. This argument is merely heuristic, but can be made rigorous by sieve methods or by analytic methods going back to Landau (further developed  by Selberg and Delange). An early reference for this fact about $\phi$ is \cite{AE44}; that paper treats $\sigma(n)$ (the sum-of-divisors function) rather than $\phi(n)$, but the proof is almost the same.

It follows that there are functions $y=y(x)$, tending monotonically to infinity, with the property that all but $o(x)$ values of $n \le x$ are divisible by $\prod_{p \le y} p$, as $x\to\infty$. It is implicit in the arguments of Erd\H{o}s in \cite{erdos48} (see also \cite{erdos61}) that $y=(\log\log{x})^{1-\epsilon}$ is admissible, for any fixed $\epsilon \in (0,1)$. Erd\H{o}s's reasoning is developed in \cite{EGPS90} and \cite{LP02}, where it is shown that we may take $y = c \log\log{x}/\log\log\log{x}$ for some positive constant $c$. Among other things, our main result yields a very precise determination of the allowable values of $y$.

We need a bit of set-up to state our main theorem. With $\log_k$ denoting the $k$-fold iterated logarithm, we set
\[ A(x) = \frac{\log_2{x}}{\log_3{x}} + 3\frac{\log_2 x \cdot \log_4 x}{(\log_3 x)^2}, \quad B(x) = \frac{\log_2{x}}{(\log_3 x)^2}. \]
For $n \le x$ and $\Lambda$ real, we set
\[ f(n,\Lambda) = \#\{\text{primes }p \le A(x) + \Lambda \cdot B(x): p \nmid \phi(n)\}.\]

\begin{thm}\label{thm:main} Fix $\lambda > 0$. Then $f(n,\log\lambda)$, as a statistic on integers $n\le x$, is asymptotically Poisson distributed with parameter $\lambda$. That is, for each fixed nonnegative integer $k$, the proportion of $n\le x$ with
\[ \#\{\text{primes }p \le A(x) + (\log\lambda) B(x), p \nmid \phi(n)\} = k \]
tends to $\displaystyle e^{-\lambda} \frac{\lambda^k}{k!}$, as $x\to\infty$.
\end{thm}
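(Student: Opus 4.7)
The plan is to use the method of factorial moments: an integer-valued sequence of random variables converges in distribution to Poisson$(\lambda)$ if and only if its $k$-th factorial moment converges to $\lambda^k$ for every fixed $k\ge 0$. Writing $P=A(x)+(\log\lambda)B(x)$, the $k$-th factorial moment of $f(n,\log\lambda)$ (with $n$ uniform on $[1,x]$) equals
\[ \frac{1}{x}\sum_{\substack{p_1,\ldots,p_k\le P\\\text{distinct primes}}}\#\{n\le x\colon p_i\nmid\phi(n)\text{ for every }i\}. \]
It suffices, then, to show that this expression tends to $\lambda^k$ for each fixed $k$.

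For a fixed $k$-tuple of distinct primes $p_1,\ldots,p_k\le P$, after discarding the $O(\sum_i x/p_i^2)$ exceptional $n$ divisible by some $p_i^2$, the event that $p_i\nmid\phi(n)$ for all $i$ is equivalent to the statement that no prime factor $\ell$ of $n$ satisfies $\ell\equiv 1\pmod{p_i}$ for any $i$. A uniform Landau/Selberg--Delange estimate, in the spirit of \cite{EGPS90,LP02}, should then give
\[ \#\{n\le x\colon p_i\nmid\phi(n)\text{ for all }i\} = x\,(1+o(1))\prod_{i=1}^{k}\frac{C(p_i)}{(\log x)^{1/(p_i-1)}}, \]
uniformly for $p_i\le P$, where $C(p)\to 1$ as $p\to\infty$. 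The factorization over $i$ encodes the approximate independence of the events $\{p_i\nmid\phi(n)\}$: by inclusion--exclusion the joint congruences $\ell\equiv 1\pmod{p_{i_1}\cdots p_{i_j}}$ (for $j\ge 2$) contribute only $O(\log_2 x/(p_{i_1}\cdots p_{i_j}))=o(1)$ in the exponent, hence only a $1+o(1)$ multiplicative factor.

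Next I would convert the resulting one-variable sum to an integral. Writing $p=A(x)+\mu B(x)$ and Taylor-expanding yields
\[ \frac{\log_2 x}{p-1}=\log_3 x-3\log_4 x-\mu+o(1), \]
where the secondary term $3\log_2 x\cdot\log_4 x/(\log_3 x)^2$ in $A(x)$ is engineered precisely to produce the $-3\log_4 x$ that makes $(\log x)^{-1/(p-1)}\sim e^{\mu}(\log_3 x)^3/\log_2 x$. Since $1/\log A(x)\sim 1/\log_3 x$, the density of primes in the $\mu$-variable near $A(x)$ is $B(x)/\log A(x)\sim\log_2 x/(\log_3 x)^3$, and the two factors combine to give
\[ \sum_{p\le P}\frac{C(p)}{(\log x)^{1/(p-1)}}\longrightarrow\int_{-\infty}^{\log\lambda}e^{\mu}\,d\mu=\lambda. \]
Combined with the product structure above, the $k$-th factorial moment tends to $\lambda^k$, completing the argument.

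The main obstacle is securing the uniform density asymptotic for $\#\{n\le x\colon p_i\nmid\phi(n),\ \forall i\}$ as the primes $p_i$ are allowed to grow with $x$ up to size $\log_2 x/\log_3 x$. A single-prime Landau argument does not suffice; uniformity as the modulus grows with $x$ is delicate, and the inclusion--exclusion errors from composite moduli $p_{i_1}\cdots p_{i_j}$ must be controlled by a Mertens-type estimate in arithmetic progressions valid for moduli as large as a fixed power of $P$. Once these uniform analytic inputs are in place, the remaining steps---the Taylor expansion, the integral computation, and the moment method---are essentially routine.
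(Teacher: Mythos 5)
Your skeleton is the same as the paper's: factorial moments via Fr\'echet--Shohat, approximate independence of the events $\{p_i\nmid\phi(n)\}$ coming from the negligibility of the joint congruence conditions to moduli $p_ip_j$, and the conversion of the resulting single sum over $p$ to the integral $\int e^{\mu}\,\mathrm{d}\mu=\lambda$ (your Taylor expansion of $\log_2 x/(p-1)$, and your explanation of why the secondary term $3\log_2x\cdot\log_4x/(\log_3x)^2$ in $A(x)$ is there, are exactly right). The problem is that the one step you defer --- the uniform asymptotic $\#\{n\le x: p_i\nmid\phi(n)\ \forall i\}\sim x\prod_i C(p_i)(\log x)^{-1/(p_i-1)}$ --- is the entire analytic content of the theorem. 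You write that a Landau/Selberg--Delange argument ``should'' give it and then, candidly, that securing it is ``the main obstacle'' and is ``delicate.'' A proposal that postulates its key lemma has not proved the theorem: making Selberg--Delange uniform as the moduli grow like $\log_2x/\log_3x$, and simultaneously in $r$ moduli together with all their pairwise products, is a genuine piece of work, not a citation.

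It is worth knowing how the paper closes exactly this gap, because the device is simpler than what you propose. Instead of asking for an asymptotic count of $n$ with no prime factor in the unbounded set $\{\ell\equiv1\pmod{p_i}\}$, one first replaces $\phi$ by the truncation $\tilde{\phi}(n)=\prod_{\ell\mid n,\,\ell\le x^{1/\log_3x}}(\ell-1)$ and shows (Lemma \ref{lem:fvsg}) that this, together with dropping all $p\le A_0(x)=\log_2x/\log_3x$ from the count, changes the statistic for only $o(x)$ integers: primes $p\le A_0(x)$ almost always divide $\phi(n)$ (Brun upper bound plus Mertens in progressions), while primes $\ell>x^{1/\log_3x}$ in a progression $1\bmod p$ with $p>A_0(x)$ rarely divide $n$ (Brun--Titchmarsh). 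After the truncation the sifting set lies below $x^{1/\log_3x}$, and the fundamental lemma of the sieve yields the count $\sim x\prod_{\ell\in\Ll}(1-1/\ell)$ \emph{uniformly in the sifting set} --- no $L$-functions, no uniformity issues --- after which the elementary Mertens-type estimate in progressions (Lemma \ref{lem:reciprocals}) evaluates everything. Two smaller loose ends in your write-up: (a) passing from the sum over distinct $k$-tuples to the $k$th power of the single sum requires showing the diagonal tuples contribute $o(1)$ (a repeated prime forces a factor $\exp(-2\log_2x/p)\ll(\log_2x)^{-1.9}$, and there are only $O(\log_2x)$ primes in range, so this is quick); (b) discarding the $n$ with $p_i^2\mid n$ at a cost of $O(x/p_i^2)$ \emph{per tuple} does not work as stated, since summing that error over the $\asymp\pi(P)^{k}$ tuples gives something unbounded for $k\ge2$; the paper avoids this by handling the condition $p^2\mid n$ once, globally, and only for $p>A_0(x)$, where $\sum_{p>A_0(x)}1/p^2=o(1)$.
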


Taking $k=0$ in Theorem \ref{thm:main}, we deduce:
\setlength{\fboxsep}{7pt}
\begin{equation}\label{eq:probability}
\fbox{\begin{minipage}{32em}
The limiting proportion of $n\le x$ with $\phi(n)$ divisible by all primes up to $A(x) + (\log \lambda) B(x)$ is $\mathrm{e}^{-\lambda}$.
\end{minipage}}
\end{equation}

This has the following immediate consequence.
\begin{cor}\label{cor:almostall} Whenever $\Lambda=\Lambda(x) \to \infty$ as $x\to\infty$, almost all $n \le x$ have $\phi(n)$ divisible by all primes up to $A(x) - \Lambda(x) B(x)$, while almost no $n\le x$ have $\phi(n)$ divisible by all primes up to $A(x) + \Lambda(x) B(x)$.
\end{cor}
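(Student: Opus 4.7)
The plan is to deduce both halves by sandwiching the variable cutoff $A(x)\pm\Lambda(x)B(x)$ between two thresholds of the form $A(x)+(\log\lambda)B(x)$ with $\lambda>0$ a fixed constant, and then invoking the $k=0$ consequence \eqref{eq:probability} of Theorem~\ref{thm:main}. The underlying monotonicity is trivial but essential: raising the cutoff $M$ only shrinks the set of $n\le x$ whose totient is divisible by every prime $\le M$, so set inclusions will translate directly into density inequalities.

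For the first half, I would fix $\varepsilon>0$, choose $\lambda>0$ so small that $e^{-\lambda}>1-\varepsilon$, and use $\Lambda(x)\to\infty$ to obtain $-\Lambda(x)<\log\lambda$ for all sufficiently large $x$; hence
\[ A(x)-\Lambda(x)B(x) < A(x)+(\log\lambda)B(x), \]
so the event that $\phi(n)$ is divisible by every prime up to the larger cutoff (of density $e^{-\lambda}+o(1)$ by \eqref{eq:probability}) is contained in the event at the smaller cutoff. The proportion of $n\le x$ with $\phi(n)$ divisible by every prime $\le A(x)-\Lambda(x)B(x)$ is therefore at least $1-\varepsilon-o(1)$, and letting $\varepsilon\to 0^+$ forces the density to $1$. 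The second half is symmetric: pick $\lambda$ so large that $e^{-\lambda}<\varepsilon$; eventually $\Lambda(x)>\log\lambda$, so the event at cutoff $A(x)+\Lambda(x)B(x)$ is contained in that at cutoff $A(x)+(\log\lambda)B(x)$, which by \eqref{eq:probability} has density at most $\varepsilon+o(1)$, and $\varepsilon\to 0^+$ again closes the argument.

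I do not foresee any real obstacle: the whole proof is a squeeze using \eqref{eq:probability}, and the only point requiring care is keeping the direction of the inclusions straight (a larger cutoff is a stronger requirement, hence a smaller event). Nothing beyond \eqref{eq:probability} and the hypothesis $\Lambda(x)\to\infty$ is used; in particular no rate of divergence for $\Lambda(x)$ is needed.
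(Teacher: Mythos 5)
Your squeeze argument is correct and is precisely the (unwritten) deduction the paper intends when it calls the corollary an ``immediate consequence'' of \eqref{eq:probability}: fix $\lambda$ extreme enough, use $\Lambda(x)\to\infty$ to compare cutoffs, and exploit the monotonicity of the divisibility event in the cutoff. No issues.
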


As the astute reader may have noticed, the argument sketched at the start of the introduction works equally well to show that $\phi(n)$ is almost always divisible by any fixed integer $m$ (not necessarily prime!). This point of view suggests studying the largest factorial dividing $\phi(n)$.  In \S\ref{sec:factorial} we establish the natural factorial analogues of \eqref{eq:probability} and Corollary \ref{cor:almostall}.

In \S\ref{sec:lambda} we consider the primorial and factorial problems for Carmichael's universal exponent function $\lambda(n)$.
Finally, in \S\ref{sec:other} we raise the related questions where instead of asking what occurs for almost all $n$,
we ask what occurs for almost all $\phi$-values, or $\lambda$-values.

\subsection*{Notation} Throughout, we reserve the letters $\ell$ and $p$ for primes.
We use the notation $v_p(n)$ to denote the largest integer $v$ with $p^v\mid n$.

\section{Proof of Theorem \ref{thm:main}}\label{sec:mainproof}

\begin{lem}\label{lem:sieve} Let $\Pp$ be a set of primes, let $x\ge 1$, and let $S = \sum_{\ell \in \mathcal{P},~\ell \le x} \frac{1}{\ell}$.
Uniformly for all choices of $\Pp$,
the proportion of $n\le x$ free of prime factors from $\Pp$ is
$\ll \exp(-S)$.
\end{lem}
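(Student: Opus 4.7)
My plan is to reduce the statement to a standard upper-bound sieve estimate. The crucial preliminary observation is that primes $\ell \in \mathcal{P}$ with $\ell > \sqrt{x}$ contribute only $O(1)$ to $S$, by Mertens' theorem: $\sum_{\sqrt{x} < \ell \le x} 1/\ell = \log 2 + o(1)$. Writing $\mathcal{P}_1 = \mathcal{P} \cap [1, \sqrt{x}]$ and $S_1 = \sum_{\ell \in \mathcal{P}_1} 1/\ell$, we have $S_1 = S - O(1)$, so it suffices to prove the lemma with $\mathcal{P}$ replaced by $\mathcal{P}_1$ and $S$ by $S_1$.

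Since any $n \le x$ free of prime factors from $\mathcal{P}$ is, a fortiori, free of prime factors from $\mathcal{P}_1 \subseteq \mathcal{P}$, the claim then reduces to the inequality
\[
\#\{n \le x : \ell \nmid n \text{ for every } \ell \in \mathcal{P}_1\} \ll x \prod_{\ell \in \mathcal{P}_1}\left(1 - \frac{1}{\ell}\right),
\]
which is a standard consequence of the fundamental lemma of the sieve applied with sifting parameter $z = \sqrt{x}$ (alternatively one could invoke Brun's pure sieve or Selberg's upper-bound sieve). The inequality $1 - t \le e^{-t}$ then turns the right-hand product into $\exp(-S_1)$, and combined with $S_1 = S - O(1)$ this yields the desired $\ll x \exp(-S)$.

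The one point requiring care is the \emph{uniformity} of the sieve estimate in $\mathcal{P}$, with an absolute implied constant. This is built into the classical sieves: their error terms depend only on $x$ and on the sifting parameter, not on the structure of the set being sifted. For large $x$ the sieve main term dominates the error (since $S \le \log_2 x + O(1)$ by Mertens), while for bounded $x$ the bound is trivially verified by adjusting the implied constant. I do not expect any serious obstacles in executing the plan.
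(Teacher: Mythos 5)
Your proposal is correct and follows essentially the same route as the paper, which simply cites Brun's sieve (Theorem 2.2 of Halberstam--Richert); your extra steps---truncating $\Pp$ at $\sqrt{x}$ at the cost of $O(1)$ in $S$ via Mertens, and converting the sieve product to $\exp(-S)$ via $1-t\le e^{-t}$---are exactly the standard bookkeeping implicit in that citation.
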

\begin{proof}
This result follows from Brun's sieve, see \cite[Theorem 2.2]{HR74}.
\end{proof}

\begin{lem}\label{lem:reciprocals} Let $m$ be a positive integer, and let $x\ge 3$. Put
\[ S(x;m) = \sum_{\substack{\ell \le x \\ \ell \,\equiv\,1\kern-3pt\!\!\!\pmod{m}}} \frac{1}{\ell}. \]
Then $$S(x;m) = \frac{\log_2{x}}{\phi(m)} + O\left(\frac{\log{(2m)}}{\phi(m)}\right).$$
\end{lem}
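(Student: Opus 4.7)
The lemma is a Mertens-type estimate for primes in the arithmetic progression $1 \pmod m$, uniform in the modulus $m$. I would prove it via partial summation, the key input being the prime-counting function $\pi(t;m,1)$ for the progression.

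The degenerate case is $m > x$: no prime $\ell \le x$ satisfies $\ell \equiv 1 \pmod m$ (such an $\ell$ would have to be $\ge m+1$), so $S(x;m) = 0$, and the claim reduces to $\log_2 x/\phi(m) = O(\log(2m)/\phi(m))$, which holds because $\log_2 x \le \log x \le \log m$. In the main case $m \le x$, I would apply partial summation to write
\[ S(x;m) = \frac{\pi(x;m,1)}{x} + \int_m^x \frac{\pi(t;m,1)}{t^2}\,dt, \]
and split the integral at $t = m^2$. On $[m, m^2]$ the trivial count $\pi(t;m,1) \le t/m$ contributes $O((\log m)/m)$, which is absorbed into $O(\log(2m)/\phi(m))$ using $\phi(m) \le m$. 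On $[m^2, x]$ I would insert the Dirichlet--PNT asymptotic $\pi(t;m,1) = \mathrm{Li}(t)/\phi(m) + E(t;m)$; the main term integrates cleanly to $\log_2 x/\phi(m) + O(1/\phi(m))$, giving the claimed leading behaviour.

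The genuine obstacle is controlling $E(t;m)$ uniformly in $m$. Effective versions of PNT-AP (via Siegel--Walfisz) are satisfactory when $m$ is at most a fixed power of $\log t$, but they degrade badly when $m$ approaches $\sqrt{x}$, potentially even failing because of an exceptional real zero. To sidestep this, I would bound the error integral unconditionally by the Brun--Titchmarsh inequality
\[ \pi(t;m,1) \le \frac{2\,t}{\phi(m)\,\log(t/m)} \qquad (t > m), \]
which after partial summation gives a contribution of size $O(\log(2m)/\phi(m))$ from the range where the asymptotic is unavailable. Knitting the Siegel--Walfisz regime and the Brun--Titchmarsh regime together so that both the main term and the error term come out with the same $\phi(m)^{-1}$ denominator is the technical heart of the argument. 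Alternatively, one can bypass the decomposition entirely by expanding via Dirichlet characters modulo $m$: the principal character yields the claimed main term (using the classical Mertens estimate together with $\sum_{\ell\mid m} 1/\ell \ll \log\log(3m)$), while the non-principal contributions are handled by uniform upper bounds on $\sum_{\ell \le x}\chi(\ell)/\ell$, giving an error of the stated size after dividing by $\phi(m)$.
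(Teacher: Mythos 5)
The paper does not actually prove this lemma: it is quoted from Remark 1 of \cite{Pomerance77} and the Lemma on p.~699 of \cite{norton76}, and the argument behind those citations is essentially the one you outline --- partial summation against $\pi(t;m,1)$, Brun--Titchmarsh where the prime number theorem for progressions is unavailable, and Siegel--Walfisz where it is. Your bookkeeping for why the two regimes can be knitted together is also right in spirit: when $m>\log x$ one has $\log\log x\le \log(2m)$, so Brun--Titchmarsh alone (even though it loses a factor $2$ in the main term) already gives the statement; and for smaller $m$ the Brun--Titchmarsh contribution over the initial range costs $O\left(\frac{1}{\phi(m)}\log\log(\text{splitting point})\right)$, which stays inside the permitted error as long as the splitting point is at most $\exp(m^{O(1)})$.

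Two points need repair. First, the split at $t=m^2$ cannot be where you switch to the PNT for progressions: uniformity there requires $m\le(\log t)^{A}$, i.e.\ $t\ge \exp(m^{1/A})$, which is vastly larger than $m^2$. The Brun--Titchmarsh regime must therefore extend up to $t\approx\exp(m)$, and the saving grace --- which is the whole point and should be stated explicitly --- is that $\int_{2m}^{\exp(m)}\frac{2\,dt}{\phi(m)\,t\log(t/m)}\ll \frac{\log(2m)}{\phi(m)}$, exactly the allowed error, while $\frac{1}{\phi(m)}\int_{\exp(m)}^{x}\frac{\Li(t)}{t^{2}}\,dt=\frac{\log\log x}{\phi(m)}+O\left(\frac{\log(2m)}{\phi(m)}\right)$; one also needs that a possible Siegel zero contributes negligibly to $\pi(t;m,1)$ once $t\ge\exp(m)$, which follows (ineffectively) from Siegel's theorem. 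Second, the Dirichlet-character alternative as sketched does not reach the stated error term: for non-principal $\chi$ the sum $\sum_{\ell\le x}\chi(\ell)/\ell$ tends to $\log L(1,\chi)+O(1)$, which is generically of size $\asymp 1$, so the triangle inequality over the $\phi(m)-1$ non-principal characters yields only $O(1)$ after dividing by $\phi(m)$ --- strictly weaker than $O(\log(2m)/\phi(m))$ as soon as $\phi(m)\gg\log(2m)$. Drop that alternative, or accept the weaker error $O(1/1)$ it gives (which would in fact not suffice for all of the applications in Sections 2 and 4, where the lemma is invoked with $\phi(m)$ of size a power of $\log\log x$).
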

\begin{proof} See Remark 1 of \cite{Pomerance77} or the Lemma on p.\ 699 of \cite{norton76}.
\end{proof}

It will be convenient for the proof of Theorem \ref{thm:main} to work not with $f(n,\Lambda)$ but with a variant function that seems less natural but is more amenable to analysis. Put $$ \tilde{\phi}(n) = \prod_{\substack{\ell\mid n \\ \ell \le x^{1/\log_3{x}}}}(\ell-1),$$
let $$A_0(x) = \log_2{x}/\log_3{x},$$ and define
\[ g(n,\Lambda) = \#\{p: A_0(x) < p \le A(x) + \Lambda \cdot B(x), \text{ and } p \nmid \tilde{\phi}(n)\}.\]
The next lemma assures us that for the density results we aim at, there is no difference dealing with $g$ versus $f$.

\begin{lem}\label{lem:fvsg}Fix a real number $\Lambda$. The proportion of $n \le x$ with $f(n,\Lambda) \ne g(n,\Lambda)$ tends to $0$, as $x\to\infty$.
\end{lem}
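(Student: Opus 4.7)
The plan is to describe the $n \le x$ with $f(n,\Lambda) \ne g(n,\Lambda)$ combinatorially and then bound their number via the two preceding lemmas.

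First, observe that $\tilde\phi(n) \mid \phi(n)$: writing $\phi(n) = \prod_{\ell^a \| n}(\ell-1)\ell^{a-1}$, one sees $\prod_{\ell \mid n}(\ell-1) \mid \phi(n)$, and $\tilde\phi(n)$ is a subproduct. Hence $p \mid \tilde\phi(n)$ implies $p \mid \phi(n)$, so any $n$ with $f(n,\Lambda) \ne g(n,\Lambda)$ has at least one of the following features: either (a) some prime $p \le A_0(x)$ satisfies $p \nmid \phi(n)$, or (b) some prime $p$ in $(A_0(x), A(x) + \Lambda B(x)]$ satisfies $p \mid \phi(n)$ but $p \nmid \tilde\phi(n)$.

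For (a), $p \nmid \phi(n)$ forces $n$ to have no prime factor $\ell \le x$ with $\ell \equiv 1 \pmod{p}$. By Lemmas \ref{lem:reciprocals} and \ref{lem:sieve}, the proportion of such $n$ is $\ll (\log x)^{-1/(p-1)} p^{O(1/(p-1))}$. For $p \le A_0(x)$ this is $\ll \exp(-\log_3 x) = 1/\log_2 x$, so summing over the $O(\log_2 x/(\log_3 x)^2)$ primes $p \le A_0(x)$ gives $O(1/(\log_3 x)^2) = o(1)$.

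For (b), the condition forces either $p^2 \mid n$ or the existence of a prime $\ell \mid n$ with $\ell > x^{1/\log_3 x}$ and $\ell \equiv 1 \pmod{p}$. The first sub-case contributes at most $x \sum_{p > A_0(x)} 1/p^2 \ll x/A_0(x) = o(x)$ integers $n$. The second is bounded by
\[ x \sum_{A_0(x) < p \le A(x) + \Lambda B(x)}\; \sum_{\substack{x^{1/\log_3 x} < \ell \le x \\ \ell \,\equiv\, 1 \pmod{p}}} \frac{1}{\ell}. \]
Differencing Lemma \ref{lem:reciprocals} at the endpoints $x$ and $x^{1/\log_3 x}$ gives the inner sum as $\log_4 x/(p-1) + O(\log p /(p-1)) = O(\log_3 x/(p-1))$ for $p$ in the outer range. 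Mertens' theorem applied to this short interval yields $\sum 1/p \ll \log_4 x/(\log_3 x)^2$, so the whole expression is $\ll x \log_4 x/\log_3 x = o(x)$.

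The main obstacle is case (b): one must recognize that the dangerous $n$ must harbour a large prime factor in the progression $1 \pmod{p}$ for some prime $p$ in the narrow window, and then confirm that the combined smallness of the $\ell$-interval and of the $p$-interval compensates for the enlarged Lemma \ref{lem:reciprocals} error (where $\log p \gg \log_4 x$) to yield $o(x)$.
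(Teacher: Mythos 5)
Your decomposition is exactly the paper's: the discrepancy set splits into (a) primes $p\le A_0(x)$ with $p\nmid\phi(n)$, handled via Lemmas \ref{lem:sieve} and \ref{lem:reciprocals} just as in the paper, and (b) primes in the window with $p\mid\phi(n)$ but $p\nmid\tilde{\phi}(n)$, which forces $p^2\mid n$ or a prime factor $\ell\equiv 1\pmod{p}$ with $\ell>x^{1/\log_3 x}$; your treatments of (a) and of the $p^2\mid n$ sub-case match the paper's. Where you genuinely differ is the last estimate in (b): the paper bounds the inner sum over $\ell$ by $O(\log_4 x/p)$ via Brun--Titchmarsh and partial summation and then crudely bounds $\sum_p 1/p\ll 1/\log_3 x$ by enlarging the $p$-range to $(A_0(x),2A_0(x)]$, whereas you difference Lemma \ref{lem:reciprocals} at the two endpoints --- which only yields the weaker $O(\log_3 x/(p-1))$, since $\log(2p)\asymp\log_3 x$ here --- and compensate with the sharper claim $\sum_p 1/p\ll\log_4 x/(\log_3 x)^2$ over the actual short interval; both routes land on the same final bound $\ll x\log_4 x/\log_3 x=o(x)$. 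The one step needing repair is your appeal to ``Mertens' theorem'' for that short-interval sum: in its classical form the error term is $O(1/\log A_0(x))=O(1/\log_3 x)$, which swamps the claimed main term $\log_4 x/(\log_3 x)^2$, and with only $\sum_p 1/p\ll 1/\log_3 x$ your chain gives $O(x)$ rather than $o(x)$. The bound you assert is nonetheless true --- an upper bound for the number of primes in $(A_0(x), A(x)+\Lambda B(x)]$, from Brun--Titchmarsh or from the prime number theorem with a power-of-$\log$ error term, gives $\ll\log_2 x\,\log_4 x/(\log_3 x)^3$ primes there and hence the stated estimate --- so the gap is real but easily filled.
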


\begin{proof} Suppose that $f(n,\Lambda) \ne g(n,\Lambda)$. Then either there is a prime $p$ counted by $f$ and not by $g$, or vice versa.

In the first case, we must have $p \le A_0(x)$. Since $p\nmid \phi(n)$, there is no prime $\ell \equiv 1\pmod{p}$ for which $\ell \mid n$. By Lemmas \ref{lem:sieve} and \ref{lem:reciprocals}, the proportion of  $n \le x$ satisfying this latter condition is
\[ \ll  \exp(-S(x;p))\le \exp\left(-\frac{\log_2x}{p-1}+O(1)\right)\ll \frac1{\log_2x}. \]
Summing on $p \le A_0(x)$, we find that the proportion of $n\le x$ occurring in this first case is $O(1/(\log_3{x})^2)$, and so is $o(1)$.

In the second case, $p > A_0(x)$ and $p \mid \phi(n)$, but $p \nmid \tilde{\phi}(n)$. Thus, either
\begin{enumerate}
\item[(i)] $p^2\mid n$, \emph{or}
\item[(ii)] there is a prime $\ell \mid n$, $\ell\equiv 1\pmod{p}$ with $\ell > x^{1/\log_3{x}}$.
\end{enumerate}
The proportion of $n\le x$ for which (i) can occur (for some $p$) is $\ll \sum_{p > A_0(x)}\frac{1}{p^2}$, and so is $o(1)$. The proportion of $n\le x$ for which (ii) can occur is
\[ \ll \sum_{A_0(x) < p \le A(x) + \Lambda \cdot B(x)} \sum_{\substack{\ell \,\equiv\,1\kern-3pt\pmod{p} \\ x^{1/\log_3{x}} < \ell \le x}} \frac{1}{\ell}. \]
By Brun--Titchmarsh and partial summation, the inner sum on $\ell$ is $O((\log_4{x})/p)$, making the last display (for large $x$)
\begin{multline*} \ll \log_4{x} \sum_{A_0(x) < p \le A(x) + \Lambda \cdot B(x)}\frac{1}{p} \ll  \frac{\log_4{x}}{A_0(x)} \cdot \#\{p: A_0(x) < p \le A(x) + \Lambda \cdot B(x)\} \\ \le \frac{\log_4{x}}{A_0(x)} \cdot \#\{p: A_0(x) < p \le 2A_0(x)\}\
\ll \frac{\log_4{x}}{A_0(x)} \cdot \frac{A_0(x)}{\log A_0(x)} \ll \frac{\log_4{x}}{\log_3{x}}.\end{multline*}
 Thus, the proportion of $n\le x$ as in (ii) is also $o(1)$.
\end{proof}

In view of Lemma \ref{lem:fvsg}, to prove Theorem \ref{thm:main} it suffices to show that $g(n,\log\lambda)$ is asymptotically Poisson distributed with parameter $\lambda$. The Poisson distribution of parameter $\lambda$, which we will denote by $\texttt{Po}(\lambda)$, is determined by its moments (see, for instance, Theorem 30.1 on p.\ 388 of \cite{billingsley95}, along with Example 21.4 on p.\ 279 there). It is equivalent, but somewhat simpler here, to work with factorial moments instead of moments. The $r$th factorial moment of $\texttt{Po}(\lambda)$ is $\lambda^r$, and so by the Fr\'echet-Shohat moment theorem (see \cite[Theorem 28, p. 81]{galambos95}) it is enough to prove that \[ \lim_{x\to\infty} \frac{1}{x} \sum_{n\le x} g(n,\log\lambda) (g(n,\log\lambda)-1) \cdots (g(n,\log\lambda)-(r-1)) = \lambda^r \]
for each fixed $r=1,2,3,\dots$.

By induction on $r$, or by first recognizing the falling factorial as the numerator of a binomial coefficient, we see that
\begin{multline}\label{eq:gexpansion} \frac{1}{x}\sum_{n \le x} g(n,\log\lambda) (g(n,\log\lambda)-1) \cdots (g(n,\log\lambda)-(r-1)) \\ =
\frac{1}{x}\sum_{\substack{A_0(x) < p_1, \dots, p_r \le A(x) + (\log \lambda) B(x) \\ p_1,\dots,p_r \text{ distinct}}} \#\{n\le x: \gcd(\tilde{\phi}(n),p_1\cdots p_r)=1\}. \end{multline}
Fix distinct primes $p_1,\dots,p_r$ as in the sum.
Putting $\Ll = \{\ell \le x^{1/\log_3 x}: \ell \equiv 1\pmod{p_i}\text{ for some $i$}\}$, the right-hand summand in \eqref{eq:gexpansion} counts those $n\le x$ not divisible by any prime $\ell \in \Ll$. By the fundamental lemma of the sieve\footnote{Specifically, we use the following consequence of Theorem 2.5 in \cite{HR74}: If $\Ll$ is any set of primes not exceeding $x^{1/\log_3 x}$, then the number of $n\le x$ not divisible by any member of $\Ll$ is $\sim x\prod_{\ell \in \Ll} (1-1/\ell)$, as $x\to\infty$, uniformly in $\Ll$.}, this count is $\sim x \prod_{\substack{\ell \in\Ll}} \left(1-\frac{1}{\ell}\right)$ as $x\to\infty$, where the asymptotic holds uniformly in $p_1,\dots,p_r$. Since each $\ell > A_0(x)$, this is in turn $\sim x\exp(-T)$, where $T = \sum_{\ell \in \Ll} \frac{1}{\ell}$.
Now
\[ T = \sum_{i=1}^{r} \sum_{\substack{\ell \le x^{1/\log_3 x} \\ \ell \,\equiv\,1\kern-3pt\pmod{p_i}}}\frac{1}{\ell} + O\bigg(\max_{i,j} \sum_{\substack{\ell \le x^{1/\log_3 x} \\ \ell \,\equiv\,1\kern-3pt\pmod{p_i p_j}}} \frac{1}{\ell} \bigg). \]
(We suppress the dependence of the implied constant on $r$, which is fixed.) Since each $p_i p_j > A_0(x)^2 > (\log_2{x})^{1.9}$, Lemma \ref{lem:reciprocals} implies that the $O$-term here is $o(1)$, as $x\to\infty$. Thus, $\exp(-T) \sim \prod_{i=1}^{r} \exp\big(-\sum_{\substack{\ell \le x^{1/\log_3{x}} \\ \ell \,\equiv\,1\kern-3pt\pmod{p_i}}}\frac{1}{\ell}\big)$,
and
\[ \frac{1}{x}\#\{n\le x: \gcd(\tilde{\phi}(n),p_1\cdots p_r)=1\} \sim \prod_{i=1}^{r} \exp\bigg(-\sum_{\substack{\ell \le x^{1/\log_3{x}} \\ \ell \,\equiv\,1\kern-3pt\pmod{p_i}}}\frac{1}{\ell}\bigg).\]
By Lemma \ref{lem:reciprocals}, the remaining sum on $\ell$  is $ \frac{1}{p_i-1}\log_2{(x^{1/\log_3 x})}+o(1) = \frac{1}{p_i}\log_2{x} + o(1)$, so that the RHS displayed above is $\sim \prod_{i=1}^{r} \exp(-\log_2{x}/p_i)$. All of our asymptotic results hold uniformly in $p_1,\dots,p_r$, and so summing on $p_1,\dots,p_r$ yields
\begin{multline}\label{eq:gexpansion2}\frac{1}{x}\sum_{n \le x} g(n,\log\lambda) (g(n,\log\lambda)-1) \cdots (g(n,\log\lambda)-(r-1)) \\
\sim \sum_{\substack{A_0(x) < p_1, \dots, p_r \le A(x) + (\log \lambda) B(x) \\ p_1,\dots,p_r \text{ distinct}}} \prod_{i=1}^{r} \exp\bigg(-\frac{\log_2{x}}{p_i}\bigg). \end{multline}

We briefly digress to study the effect of removing the distinctness condition on the $p_i$ from this last expression. The resulting sum is the $r$th power of
\[ \sum_{A_0(x) < p \le A(x) + (\log \lambda) B(x)} \exp\bigg(-\frac{\log_2{x}}{p}\bigg) = \int_{A_0(x)}^{A(x) + (\log \lambda) B(x)} \exp\left(-\frac{\log_2 x}{u}\right)\, \mathrm{d}\pi(u). \]
 Write $\pi(u) = \int_2^{u} \frac{\mathrm{d}{u}}{\log{u}} + E(u)$,
 so that $\mathrm{d}\pi(u) = \frac{\mathrm{d}u}{\log{u}} + \mathrm{d}E(u)$.
 Using that $E(u) \ll_{K} u/(\log{u})^{K}$ for every fixed $K$ (a strong form of the prime number theorem), a straightforward computaton shows that the integral above, with $\mathrm{d}\pi(u)$ replaced by $\mathrm{d}E(u)$, is $o(1)$. Turning to the remaining piece of  integral, we see that
\[ \int_{A_0(x)}^{A(x) + (\log \lambda) B(x)} \exp\left(-\frac{\log_2 x}{u}\right)\, \frac{\mathrm{d}u}{\log{u}} \sim \frac{1}{\log_3 x}\int_{A_0(x)}^{A(x) + (\log \lambda) B(x)} \exp\left(-\frac{\log_2 x}{u}\right)\, \mathrm{d}u. \]
Make the change of variables $u=A_0(x)(1+z)$. Then
\[ \frac{1}{\log_3{x}} \int_{A_0(x)}^{A(x) + (\log \lambda) B(x)}\exp\left(-\frac{\log_2 x}{u}\right)\, \mathrm{d}u = \frac{A_0(x)}{\log_3{x}}  \int_{0}^{\frac{3\log_4{x} + \log\lambda}{\log_3{x}}} \exp\left(-\frac{\log_3{x}}{1+z}\right)\, \mathrm{d}z.\]
Now $\frac{\log_3{x}}{1+z} = (\log_3{x}) (1-z) + o(1) = \log_3{x} - z\log_3{x} + o(1)$, uniformly for $0 \le z \le \frac{3 \log_4{x} + \log\lambda}{\log_3{x}}$. Therefore,
\begin{align*} \frac{A_0(x)}{\log_3{x}}  \int_{0}^{\frac{3\log_4{x} + \log\lambda}{\log_3{x}}} \exp\left(-\frac{\log_3{x}}{1+z}\right)\, \mathrm{d}z &\sim \frac{A_0(x)}{\log_2{x}\log_3{x}} \int_{0}^{\frac{3\log_4{x} + \log\lambda}{\log_3{x}}} \exp(z\log_3{x})\, \mathrm{d}z\\
&\sim \frac{A_0(x)}{\log_2 x(\log_3{x})^2} \exp(3\log_4 x+\log\lambda) = \lambda.\end{align*}
Collecting all of the estimates of this paragraph, we conclude that as $x\to\infty$,
\[ \sum_{A_0(x) < p \le A(x) + (\log \lambda) B(x)} \exp\bigg(-\frac{\log_2{x}}{p}\bigg) \to \lambda. \]

When $r=1$, the work of the last paragraph establishes that the left-hand side of \eqref{eq:gexpansion2} converges to $\lambda^r$. We also see that the same convergence assertion will follow for $r>1$ provided that
\[ \sum_{\substack{A_0(x) < p_1, \dots, p_r \le A(x) + (\log \lambda) B(x) \\ \text{some $p_i=p_j$ with $i\ne j$}}} \prod_{i=1}^{r} \exp\bigg(-\frac{\log_2{x}}{p_i}\bigg) = o(1). \]
If some $p_i=p_j$, then reordering  the $p_i$, we can force $p_1=p_2$. Thus, the above left-hand side is
\begin{multline*}
 \ll \sum_{ p, p_3, \dots, p_r} \exp\left(-2\frac{\log_2{x}}{p}\right) \prod_{i=3}^{r}\exp\left(-\frac{\log_2{x}}{p_i}\right)  \\
= \left(\sum_{p} \exp\left(-2\frac{\log_2{x}}{p}\right)\right) \prod_{i=3}^{r} \left(\sum_{p_i} \exp\left(-\frac{\log_2{x}}{p_i}\right)\right) \ll \sum_{p} \exp\left(-2\frac{\log_2{x}}{p}\right),
\end{multline*}
where, as above, $p$ and the $p_i$ range over $(A_0(x),A(x)+(\log\lambda)B(x)]$. The final sum on $p$ is $o(1)$, since each summand is $\ll (\log_2{x})^{-1.9}$ (say), and there are crudely $O(\log_2{x})$ summands. This completes the proof of Theorem \ref{thm:main}.

\begin{rmk} One could ask not only for every prime up to a certain height to appear in $\phi(n)$, but for those primes to appear to at least the $r$th power, for one's favorite fixed positive integer $r$. The above analysis can be adapted to prove an analogue of Theorem~\ref{thm:main} in this generalized setting. Define
\[ A_r(x) = \frac{\log_2{x}}{\log_3{x}} + (4-r)\frac{\log_2{x}\cdot \log_4{x}}{(\log_3{x})^2}. \]
For integers $n\le x$ and real $\Lambda$, set
\[ f_r(n,\Lambda) = \#\left\{p \le A_r(x) + \Lambda \cdot B(x): p^r\nmid \phi(n)\right\}. \]
In analogy with Theorem \ref{thm:main} (the case $r=1$), we can show that for each fixed $\lambda > 0$, the quantity $f_r(n,\log\{(r-1)!\lambda\})$, considered on the integers $n\le x$, is asymptotically Poisson distributed with parameter $\lambda$. The broad outline of the proof is the same as before; roughly speaking, the numbers with no small prime factors from the progression $1\bmod{p}$ have their role replaced by those with at most $r-1$ such prime factors.  This thought is
made more explicit in the next section.
\end{rmk}

\section{From primorials to factorials}\label{sec:factorial}
In this section we prove the following analogue of \eqref{eq:probability}.
\begin{equation}\label{eq:probability2}
\fbox{\begin{minipage}{32em} Fix $\lambda > 0$.
The limiting proportion of $n\le x$ with $\phi(n)$ divisible by $[y]!$, where $y=A(x) + (\log \lambda) B(x)$, is $\mathrm{e}^{-\lambda}$.
\end{minipage}}
\end{equation}
Note that the obvious counterpart of Corollary \ref{cor:almostall} follows as an immediate consequence.

Clearly, if $\lfloor y\rfloor! \mid \phi(n)$, then $n$ is divisible by the product of all primes up to $y$. We will show that if $n \le x$ and $\phi(n)$ is divisible by the product of all primes up to $y$, then apart from $o(x)$ exceptions, $\phi(n)$ is divisible by $\lfloor y\rfloor!$. Thus, \eqref{eq:probability2} follows from \eqref{eq:probability}.

For this, we appeal to the following generalization of Lemma \ref{lem:sieve}, due essentially to  Hal\'asz \cite{halasz72}.

\begin{prop}\label{prop:halasz} Let $\Pp$ be a set of primes, let $x\ge 1$, and let $S = \sum_{\ell \in \mathcal{P},~\ell \le x} \frac{1}{\ell}$.
Suppose that $0 < \delta < 2$. Then for each integer $m$ with $0 \le m \le (2-\delta) S$, the proportion of $n\le x$ with exactly $m$ distinct prime factors from $\Pp$ is
\[ \ll_{\delta}  \exp(-S) \frac{S^{m}}{m!}. \]
\end{prop}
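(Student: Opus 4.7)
The plan is to prove this Hal\'asz-type bound by parametrizing each $n\le x$ with exactly $m$ distinct prime divisors from $\Pp$ by those divisors, applying Brun's sieve to the remaining factor, and controlling the resulting combinatorial sum using the restriction $m\le(2-\delta)S$.

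First, I would uniquely write each admissible $n$ in the form $n=p_1^{a_1}\cdots p_m^{a_m}\cdot r$, where $p_1<\cdots<p_m$ are the prime divisors of $n$ lying in $\Pp$, each $a_i\ge 1$, and $r$ is coprime to $\prod_{p\in\Pp}p$. For fixed $(p_i,a_i)$, setting $d=p_1^{a_1}\cdots p_m^{a_m}$, Lemma~\ref{lem:sieve} applied to the set $\{p\in\Pp:p\le x/d\}$ bounds the number of permissible $r\le x/d$ by $\ll(x/d)\exp(-S_{x/d})$, where $S_y$ denotes $\sum_{p\in\Pp,\,p\le y}1/p$.

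Next, I would split according to whether $d\le\sqrt{x}$ or $d>\sqrt{x}$. In the easy case, Mertens' theorem gives $S_{x/d}\ge S-\log 2+o(1)$, so that $\exp(-S_{x/d})\ll\exp(-S)$. The $a_i$-sums collapse via $\sum_{a\ge 1}1/p_i^{a}=1/(p_i-1)$, and bounding the unordered $(p_i)$-sum by $1/m!$ times the ordered one produces a total contribution of
\[ \ll x\exp(-S)\cdot\frac{1}{m!}\Bigl(\sum_{p\in\Pp,\,p\le x}\frac{1}{p-1}\Bigr)^{m} \ll x\exp(-S)\frac{(S+O(1))^{m}}{m!}. \]
Writing $(S+O(1))^{m}=S^{m}\exp(O(m/S))$ and using the hypothesis $m\le(2-\delta)S$ to absorb the extra factor into $\ll_{\delta}1$, the easy case delivers the required bound.

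The main obstacle is the hard case $d>\sqrt{x}$, where $\exp(-S_{x/d})$ may be much larger than $\exp(-S)$. I would control it by a Rankin-style argument, replacing $x/d$ with $x^{1-s/2}d^{s-1}$ for a well-chosen $s\in(0,1)$ and estimating the resulting sum $\sum_d d^{s-1}$ via an Euler-product bound indexed by $\Pp$. The threshold $2$ appearing in the hypothesis $m\le(2-\delta)S$ is precisely what makes this optimization feasible: it corresponds to the radius of convergence of the natural generating function $\sum_{n\le x}z^{\#\{p\in\Pp\,:\,p\mid n\}}$, beyond which the dominant contribution shifts and the argument breaks down. This is essentially the approach of Hal\'asz in \cite{halasz72}, from whose main theorem the proposition may in fact be deduced directly.
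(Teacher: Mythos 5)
The paper does not give its own proof of this proposition: it is quoted from Hal\'asz \cite{halasz72}, with Norton's remarks on p.~688 of \cite{norton76} supplying the passage from prime factors counted with multiplicity to distinct prime factors. Your attempt is therefore necessarily a different route, and its first half is sound: the decomposition $n=p_1^{a_1}\cdots p_m^{a_m}r$, the sieve bound $\ll (x/d)\exp(-S_{x/d})$ for the cofactor, the Mertens comparison $S_{x/d}\ge S-\log 2+o(1)$ when $d\le\sqrt{x}$, and the unordering factor $1/m!$ do yield the stated bound for the contribution of $d\le\sqrt{x}$, with $(S+O(1))^m\le S^m\exp(O(m/S))\ll_\delta S^m$ exactly as you say.

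The gap is the range $d>\sqrt{x}$. Your Rankin step bounds the cofactor count by the trivial $x/d\le x^{1-s/2}d^{s-1}$, which discards the sieve factor $\exp(-S_{x/d})$; but the factor $\exp(-S)$ in the target can be as small as $1/\log x$ (e.g.\ $\Pp$ all primes), and once dropped it cannot be recovered from $\sum_d d^{s-1}$. Worse, that sum is genuinely too large: for fixed $s\in(0,1)$ one has $\sum_{p\le x}p^{s-1}\gg x^{s}/\log x$, so already for $m=1$ the quantity $x^{1-s/2}\sum_{p\in\Pp,\,p\le x}p^{s-1}/(1-p^{s-1})$ can exceed $x$. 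Nor is the range $d>\sqrt x$ an error term that brute force could crush: with $\Pp=\{p:p>x^{1/10}\}$ and $m=4$ (admissible since $S=\log 10+o(1)$), a positive proportion of $n\le x$ are products of exactly four primes from $\Pp$, all with $\Pp$-part exceeding $\sqrt{x}$ and small cofactor, while the target bound $e^{-S}S^{4}/4!$ is itself only a constant --- so this range carries main-term mass and needs the same precision as the easy range. Repairing it appears to require either retaining $\exp(-S_{x/d})$ and running an induction on $m$ (splitting off the largest $\Pp$-prime of $n$, Hardy--Ramanujan style), or the analytic method of Hal\'asz that the paper actually invokes. A further warning sign: in your argument the hypothesis $m\le(2-\delta)S$ is used only to tame $\exp(O(m/S))$, for which $m\ll S$ with any constant would suffice; the threshold $2$ really originates in the Euler factors $\sum_{a\ge1}y^{a}p^{-a}$ of the with-multiplicity generating function (requiring $y<2$ at $p=2$), so the place where $2$ should bite is precisely the part of the argument that is missing.
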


\noindent Actually, Hal\'asz counts prime factors with multiplicity, rather than distinct prime factors. The modifications necessary to establish the proposition as we have stated it are described by Norton on p.\ 688 of \cite{norton76}.

We suppose now that $n \le x$, that $\phi(n)$ is divisible by all primes up to $y$ (with $y=y(x)$ as in \eqref{eq:probability2}), but that $\lfloor y\rfloor! \nmid \phi(n)$. Then we can find a prime $p\le y$ with
\[ 1 \le v_p(\phi(n)) < v_p(\lfloor y\rfloor!). \]
Since $2\le v_p(\lfloor y\rfloor!) = \sum_{r \ge 1} \lfloor y/p^r\rfloor < \frac{y}{p-1}$, we have $p-1 < y/2$. Choose the integer $k\ge 2$ with
\begin{equation}\label{eq:krange} \frac{y}{k+1} \le p-1 < \frac{y}{k}. \end{equation}
Then $v_p(\phi(n)) \le k$, and so $n$ is divisible by at most $k$ primes from $\Pp =\{\ell\equiv 1\pmod{p}\}$. Put $S = \sum_{\ell \le x,~ \ell \,\equiv\,1\kern-3pt\pmod{p}} \frac{1}{\ell}$, so that
\begin{align*} S &= \frac{1}{p-1}\log_2{x} + O(1) \ge k \frac{\log_2{x}}{y} + O(1) > \frac{9}{10}k\log_3{x} + O(1) > 2k
\end{align*}
for large $x$. By Proposition \ref{prop:halasz}, the proportion of $n$ divisible by at most $k$ primes from $\Pp$ is
\[ \ll \exp(-S) \sum_{j=0}^{k} \frac{S^j}{j!} \ll \exp(-S)\frac{S^k}{k!} \ll \exp(-0.9k\log_3{x}) \frac{S^k}{k!}. \]
Using $S \le \frac{\log_2{x}}{p-1} + O(\frac{\log(2p)}{p-1})\ll \frac{\log_2{x}}{p-1} \ll k\log_3{x}$ (by Lemma \ref{lem:reciprocals}) and $k! \ge (k/\mathrm{e})^k$, we find that the last displayed expression is
\[ \ll \left(C \frac{\log_3{x}}{(\log_2{x})^{0.9}}\right)^k, \]
where $C$ is a certain absolute constant.

It remains to sum on the $p$'s corresponding to a given $k$, and then to sum on $k$. To each $k\ge 2$, there are (very crudely) $\ll y/\log{y} \ll \log_2{x}/(\log_3{x})^2$ primes $p$ in the range determined by \eqref{eq:krange}. Hence, the proportion of $n$ with $\phi(n)$ divisible by all primes up to $y$, but not by $\lfloor y\rfloor!$, is
\[ \ll \sum_{k\ge 2} \frac{\log_2{x}}{(\log_3{x})^2} \left(C \frac{\log_3{x}}{(\log_2{x})^{0.9}}\right)^k \ll (\log_2{x})^{-0.8}, \]
which tends to $0$ as desired.

\section{Carmichael's function}\label{sec:lambda}
One might ask about analogues of Theorem \ref{thm:main} and the factorial problem of \S\ref{sec:factorial}
for other number theoretic functions similar to $\phi$.  As one might expect, we have the same theorems for
the sum-of-divisors function $\sigma$, since the only complications are nontrivial prime powers, and for
almost all $n$, nontrivial prime power divisors are small.

We now ask about Carmichael's function $\lambda(n)$.  It is the
 order of the largest cyclic subgroup of $({\mathbb Z}/n{\mathbb Z})^*$; namely, the
exponent of the unit group of ${\mathbb Z}/n{\mathbb Z}$.  Carmichael's function is closely related
to Euler's function $\phi$.  In fact, from the theorem on the primitive root and from the Chinese Remainder Theorem,
we have that
\begin{align*}
\lambda(p^a)&=\phi(p^a)\hbox{ for $p>2$ or $p^a<8$},\\
\lambda(2^a)&=\frac12\phi(2^a)\hbox{ for $a\ge3$},\\
\lambda(mn)&={\rm lcm}[\lambda(m),\lambda(n)]\hbox{ when $\gcd(m,n)=1$}.
\end{align*}
It is immediate that $p\mid\phi(n)$ if and only if $p\mid\lambda(n)$, so that we have the analogue of Theorem~\ref{thm:main}
for Carmichael's function.\footnote{It is unfortunate that mathematics uses the same symbol ``$\lambda$" for a
Poisson variable as for Carmichael's function; we trust there will be no confusion.}

The situation though for factorials is markedly different.  Let $k_\lambda(n)$ denote the largest integer $k$ with
$k!\mid\lambda(n)$.
\begin{lem}
\label{lem:2power}
Let $\xi(n)\to\infty$ arbitrarily slowly.
There is a set of integers $S$ of asymptotic density $1$ such that for $n\in S$,
\begin{enumerate}
\item
$\frac1{\xi(n)}\log_2n\le \max\{2^{v_2(p-1)}:p\mid n\}\le\xi(n)\log_2n$,
\item
$v_2(\lambda(n))=\max\{v_2(p-1):p\mid n\}$,
\item
 $k_\lambda(n)$ is the largest integer $k$ with
$v_2(k!)\le v_2(\lambda(n))$.
\end{enumerate}\end{lem}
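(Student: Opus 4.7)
The plan is to verify (i)--(iii) in sequence, each via a sieve estimate of the kind used earlier in the paper. Throughout, I assume without loss of generality that $\xi$ is monotone nondecreasing with $\xi(n) \le \log_4 n$; the general case follows by replacing $\xi$ with the minimum of $\xi$ and any such slow function.

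For (i), both inequalities reduce to Lemmas \ref{lem:sieve} and \ref{lem:reciprocals} applied to the progression $1 \pmod{2^K}$. For the upper bound, if some $\ell \mid n$ satisfies $2^{v_2(\ell-1)} > \xi(x)\log_2 x$, then $n$ has a prime factor $\ell \equiv 1 \pmod{2^K}$ for some $2^K > \xi(x)\log_2 x$; the proportion of such $n \le x$ is bounded by $\sum_{\ell \le x,\ \ell \equiv 1 \pmod{2^K}} 1/\ell \ll \log_2 x/2^{K-1} \ll 1/\xi(x)$ (by Lemma \ref{lem:reciprocals}), which remains $o(1)$ after summing over admissible $K$. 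For the lower bound, fix $K$ with $2^K \approx \log_2 x/\xi(x)$; Lemma \ref{lem:sieve} applied to $\Pp = \{\ell \equiv 1 \pmod{2^K}\}$ gives exceptional proportion $\ll \exp(-S(x;2^K))$, and Lemma \ref{lem:reciprocals} yields $S(x;2^K) \gg \xi(x)$.

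For (ii), combine the product formula on prime powers with the explicit evaluation of $\lambda(2^a)$ to write $v_2(\lambda(n)) = \max(v_2(\lambda(2^{v_2(n)})),\, \max_{p\mid n,\ p\text{ odd}} v_2(p-1))$, with $v_2(\lambda(2^a)) \le a$. Almost no $n \le x$ has $v_2(n) > \log_4 x$ (the proportion being $2^{-\log_4 x} = o(1)$), while (i) delivers $\max_{p\mid n,\ p\text{ odd}} v_2(p-1) \ge \log_3 n - \log_2 \xi(n)$ on a full-density set. For large $n$ in the intersection, the $v_2(\lambda(2^{v_2(n)}))$ term is dominated and (ii) follows.

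For (iii), let $k^\ast$ be the largest $k$ with $v_2(k!) \le v_2(\lambda(n))$; I must verify $v_p(k^\ast!) \le v_p(\lambda(n))$ for every odd $p \le k^\ast$. Since $v_2(k!) = k - s_2(k)$ with binary digit sum $s_2(k) = O(\log k)$, solving $v_2(k^\ast!) \le v_2(\lambda(n))$ yields $k^\ast = (1+o(1))v_2(\lambda(n)) = (1+o(1))\log_3 n$ on the full-density set built so far. For odd $p \le k^\ast$, set $r = v_p(k^\ast!) \le k^\ast/(p-1)$; a single prime $\ell \mid n$ with $\ell \equiv 1 \pmod{p^r}$ then forces $v_p(\lambda(n)) \ge r$. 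Now $p^r \le (\log_2 x)^{(1+o(1))\log p/(p-1)} \le (\log_2 x)^{(\log 3)/2 + o(1)}$ uniformly for odd $p$ (since $\log p/(p-1)$ is maximized at $p=3$), so Lemma \ref{lem:reciprocals} gives reciprocal sum $\gg (\log_2 x)^{0.4}$, and Lemma \ref{lem:sieve} bounds the exceptional proportion by $\exp(-c(\log_2 x)^{0.4})$---still $o(1)$ after summing over the $O(\log_3 x)$ relevant pairs $(p,r)$. The main obstacle is the coordination of exceptional sets: the bound $k^\ast = (1+o(1))\log_3 n$ used in (iii) depends on the full-density events in (i)--(ii), so $S$ must be defined as a single intersection on which all three conclusions hold simultaneously, with care taken across dyadic ranges when comparing $\xi(n)$ to $\xi(x)$.
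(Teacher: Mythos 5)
Your argument follows the paper's proof in all essentials: (i) via Lemmas \ref{lem:sieve} and \ref{lem:reciprocals} applied to progressions $1 \bmod 2^K$ with $2^K$ near $(\log_2 x)/\xi(x)$ and $\xi(x)\log_2 x$, (ii) by discarding the few $n$ divisible by a large power of $2$, and (iii) by bounding $p^{v_p(k^\ast!)}$ by a power of $\log_2 x$ with exponent below $1$ so that a sieve supplies a prime factor $\ell\equiv 1\pmod{p^{v_p(k^\ast!)}}$ of $n$. One small slip in the constants: since $v_2(k!)=k-s_2(k)$, your $k^\ast$ is $(1+o(1))v_2(\lambda(n))=(1+o(1))\log_3 n/\log 2$, not $(1+o(1))\log_3 n$, so the relevant exponent is $\log 3/(2\log 2)\approx 0.79$ rather than $(\log 3)/2$, and the reciprocal sums are only $\gg(\log_2 x)^{0.2}$ --- still amply sufficient, as the paper's own choice of exponents $0.8$ and $0.19$ reflects.
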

\begin{proof}
We may assume that $\xi(x)\le\log_3x$.  Let $2^m$ be the least power of 2 exceeding $(\log_2x)/\xi(x)$ and let
$2^M$ be the largest power of 2 not exceeding $\xi(x)\log_2x$.  It follows from Lemmas \ref{lem:sieve},  \ref{lem:reciprocals}
that but for $o(x)$ choices for integers $n\le x$ we have a prime $p\mid n$ with $p\equiv1\pmod{2^m}$.  Further, it
follows from Lemma \ref{lem:reciprocals} that the proportion of integers $n\le x$ divisible by a
 prime $p\equiv1\pmod{2^M}$ is $\ll( \log_2x)/2^M=o(x)$.  Thus, we have (i).  The only way that (ii) would not
hold is if the 2-power in $\lambda(n)$ is $\lambda(2^{v_2(n)})$.  If also $n$ satisfies (i), as we may assume,
and $n\le x$, this would imply that $2^{v_2(n)}>2^m> (\log_2x)/\xi(x)$.  The number of such $n$ is $O(x\xi(x)/\log_2x)=o(x)$
as $x\to\infty$.  Thus, we have (ii).

Using (i) and (ii) we have but for $o(x)$ choices of $n\le x$ that
\begin{equation}
\label{eq:v2lambda}
v_2(\lambda(n))=\frac{\log_3x}{\log2}+O(\log\xi(x)).
\end{equation}
It is clear that for any positive integer $N$, if $k!\mid N$, then $v_2(k!)\le v_2(N)$.
Thus, $k_\lambda(n)\le k_0:=\max\{k:v_2(k!)\le v_2(\lambda(n))\}$, so it will suffice to show that $k_0!\mid\lambda(n)$
almost surely.

Note that for any positive integer $N$ we have $v_2(N!)=N+O(\log N)$, and
for any prime $p$, $v_p(N!)\le N/(p-1)$.  Using \eqref{eq:v2lambda} we may assume for $n\le x$ that
\begin{equation}
\label{eq:k0}
k_0=\frac{\log_3x}{\log2}+O(\log_4x).
\end{equation}
For $p\ge3$, we have
\[
p^{v_p(k_0!)}\le \exp\left(\frac{k_0\log p}{p-1}\right)\le\exp\left(\frac{k_0\log3}{2}\right)=\exp\left(\frac{\log3}{2\log2}\log_3x+O(\log_4x)\right).
\]
It follows that we may assume for each prime $p\ge3$ that $p^{v_p(k_0!)}\le\exp(0.8\log_3x)=(\log_2x)^{0.8}$.

For $q$ a prime power at most $(\log_2x)^{0.8}$, the number of $n\le x$ not divisible by a prime $r\equiv1\pmod q$
is, by Lemmas \ref{lem:sieve}, \ref{lem:reciprocals}, at most $x/\exp((\log_2x)^{0.19})$.
Summing this count for prime powers up to $(\log_2{x})^{0.8}$ we obtain an expression that is $o(x)$ as $x\to\infty$, so it follows
that but for $o(x)$ choices of $n\le x$ we have $p^{v_p(k_0!)}\mid\lambda(n)$ for all primes $3\le p\le k_0$.
Since by definition we have $2^{v_2(k_0!)}\mid\lambda(n)$, we have $k_0!\mid \lambda(n)$.  This completes
the proof of (iii).
\end{proof}

\begin{thm}
\label{thm:lambdafact}
For a set of integers $n$ of asymptotic density $1$ we have $k_\lambda(n)=\log_3n/\log 2+O(\log_4n)$.
\end{thm}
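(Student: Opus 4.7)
The plan is to extract Theorem \ref{thm:lambdafact} essentially directly from Lemma \ref{lem:2power}. First I would apply the lemma with, say, $\xi(n) := \log_4 n$, producing a set $S$ of asymptotic density $1$ on which clauses (i)--(iii) hold. Taking base-$2$ logarithms in (i) and combining with (ii) yields, for $n \in S$,
\[ v_2(\lambda(n)) = \frac{\log_3 n}{\log 2} + O(\log \xi(n)) = \frac{\log_3 n}{\log 2} + O(\log_5 n), \]
which is the analogue of \eqref{eq:v2lambda} but expressed in terms of $n$ instead of $x$ (this is legitimate because the bounds in (i) are stated intrinsically in $n$).

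Next I would translate this estimate for $v_2(\lambda(n))$ into the claimed estimate for $k_\lambda(n)$ by exploiting clause (iii), which identifies $k_\lambda(n)$ as the largest integer $k$ with $v_2(k!) \le v_2(\lambda(n))$. Legendre's formula gives $v_2(k!) = k - s_2(k)$, where $s_2(k) \ll \log k$ is the binary digit sum, so $v_2(k!) = k + O(\log k)$. Inverting this relation, the largest $k$ satisfying $v_2(k!) \le V$ obeys $k = V + O(\log V)$. Applying this with $V = v_2(\lambda(n))$, which is of size $\asymp \log_3 n$ on $S$, produces
\[ k_\lambda(n) = v_2(\lambda(n)) + O(\log v_2(\lambda(n))) = \frac{\log_3 n}{\log 2} + O(\log_4 n), \]
since both the $O(\log_5 n)$ error from the first display and the inversion error $O(\log v_2(\lambda(n))) = O(\log_4 n)$ are absorbed.

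The main obstacle here is essentially cosmetic rather than substantive: all the real work — the sieve estimates controlling the maximum $2$-adic valuation of $p-1$ for $p \mid n$, and the extraction of all odd prime-power factors of $k_0!$ from $\lambda(n)$ — is already contained in Lemma \ref{lem:2power}. What remains is purely bookkeeping: verifying that the error $O(\log \xi(n))$ from clause (i) and the error $O(\log V)$ from inverting Legendre's identity both collapse into the stated $O(\log_4 n)$ tolerance, which is automatic once $\xi$ is chosen to grow more slowly than $\log_3 n$.
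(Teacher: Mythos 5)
Your proposal is correct and follows the paper's own route: the paper's proof of Theorem \ref{thm:lambdafact} is exactly the observation that the result ``follows immediately from Lemma \ref{lem:2power}, the definition of $k_0$ in its proof, and \eqref{eq:k0},'' where $k_0$ is the largest $k$ with $v_2(k!)\le v_2(\lambda(n))$ and \eqref{eq:k0} is obtained from $v_2(N!)=N+O(\log N)$ just as in your inversion step. Your write-up merely makes the bookkeeping explicit (a specific choice of $\xi$ and the absorption of the $O(\log_5 n)$ and $O(\log_4 n)$ errors), which is all fine.
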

\begin{proof}
This follows immediately from Lemma \ref{lem:2power}, the definition of $k_0$ in its proof, and \eqref{eq:k0}.
\end{proof}

\begin{rmk}
Let $s_2(m)$ denote the number of 1's in the binary expansion of $m$.
One can show that on a set of asymptotic density 1, $k_\lambda(n)$ is $m+O(\xi(n))$ where $m$ is
the largest integer with $m-s_2(m)\le v_2(\lambda(n))$.
In fact, $m-s_2(m)=v_2(m!)$, so the assertion follows from Lemma~\ref{lem:2power}.

If $k$ is even, then $v_2(k!)=v_2((k+1)!)<v_2(j!)$ for all $j\ge k+2$.  Thus, Lemma \ref{lem:2power} implies
that on a set of asymptotic density 1, $k_\lambda(n)$ is an odd integer.

Let $k_\phi(n)$ denote the largest integer $k$ with $k!\mid\phi(n)$, namely the subject of \S\ref{sec:factorial}.
It follows from the arguments there that if $p$ is the largest prime with $p\#$ (the primorial of $p$)
dividing $\phi(n)$, then on a set of $n$ of asymptotic density 1, $p!\mid\phi(n)$.  In fact, on a set of asymptotic density 1, $p!M\mid\phi(n)$,
 where $M$ is the product of
all of the composite numbers in $(p,\frac32p)$.  (To see this assume $n$ is large and let
$q$ run over the primes to $p$.  If $q>\frac34p$, then $q\nmid M$.  If $\frac17p<q<\frac34p$, then by the method
of \S\ref{sec:factorial}, we may assume that $q^{11}\mid \phi(n)$, so that $v_q(\phi(n))\ge v_q(p!M)$.
In addition, the method of \S\ref{sec:factorial} can also be used to show we may assume that $v_q(\phi(n))>(\log_2x)/(2(q-1))>v_q(p!M)$ for all $q<p/7$.)
Now, by a somewhat stronger version of Bertrand's postulate, we may assume the next prime $r$ after $p$ is $<\frac32p$.
We conclude that $(r-1)!\mid \phi(n)$ and $k_\phi(n)=r-1$.  So on
a set of asymptotic density 1, $k_\phi(n)$ is even.  This is a striking incongruence from the situation with
$k_\lambda(n)$.
\end{rmk}

\section{A related problem}\label{sec:other}
Let ${\mathbb V}=\phi({\mathbb N})$, that is, ${\mathbb V}$ is the set of distinct values of $\phi$.  Let $V(x)$ denote the number of members
of ${\mathbb V}$ in $[1,x]$.  After earlier work of Pillai, Erd\H os, Hall, Maier, and Pomerance, we finally learned the order
of magnitude of $V(x)$ in Ford \cite{ford98}.
 Ignoring subsets of ${\mathbb V}\cap[1,x]$ of size $o(V(x))$ as $x\to\infty$,
what can be said about the largest primorial (or factorial) which divides most members of ${\mathbb V}\cap[1,x]$?  We know
that most values of $\phi$ come from small fibers, and in particular there is a set of integers $S$ of asymptotic density 0
such that $V(x)\sim \#(\phi(S)\cap[1,x])$ as $x\to\infty$.  It seems likely to us that the key function here is exponentially
smaller than $\log_2 x/\log_3x$ and is of the form $(\log_3 x)^{1+o(1)}$.  It would be nice to prove this
assertion.

The analogous problem for Carmichael's function $\lambda$ is even more murky.  Let $V_\lambda(x)$ denote
the number of $\lambda$ values in $[1,x]$.  We do not know the order of magnitude of $V_\lambda(x)$, only
recently learning in \cite{flp} that $V_\lambda(x)=x/(\log x)^{\eta+o(1)}$ as $x\to\infty$, where
$\eta=1-(1+\log\log 2)/\log 2$.

\providecommand{\bysame}{\leavevmode\hbox to3em{\hrulefill}\thinspace}
\providecommand{\MR}{\relax\ifhmode\unskip\space\fi MR }
% \MRhref is called by the amsart/book/proc definition of \MR.
\providecommand{\MRhref}[2]{%
  \href{http://www.ams.org/mathscinet-getitem?mr=#1}{#2}
}
\providecommand{\href}[2]{#2}

\end{document}